\newcommand{\norm}[1]{\Vert#1\Vert}
\newtheorem{theorem}{Theorem}
\newtheorem*{theorem*}{Theorem}
\newtheorem{lemma}{Lemma}
\newtheorem{corollary}{Corollary}
\theoremstyle{remark}
\newtheorem{remark}{Remark}
\newtheorem{example}{Example}
\newcommand{\C}{\mathbb{C}}
\newcommand{\D}{\Omega}
\newcommand{\Dc}{\overline{\Omega}}
\newcommand{\dbar}{\overline{\partial}}
\title{On compactness of the $\dbar$-Neumann problem and Hankel operators}
\author{Mehmet \c{C}el\.ik}
\address[Mehmet \c{C}elik]{University of North Texas at Dallas, Department of
Mathematics and Information Sciences,  7300 Houston School Road, 
Dallas, TX 75241}
\email{Mehmet.Celik@unt.edu}
\author{S\"{o}nmez \c{S}ahuto\u{g}lu}
\address[S\"{o}nmez \c{S}ahuto\u{g}lu]{ University of Toledo, Department of
Mathematics \& Statistics, 2801 W. Bancroft, Toledo, OH 43606, USA}
\email{sonmez.sahutoglu@utoledo.edu}
\thanks{The second author is supported in part by University of Toledo's 
Summer Research Awards and Fellowships Program}
\subjclass[2010]{Primary 32W05; Secondary 47B35}
\keywords{$\dbar$-Neumann problem, Hankel operators, non-pseudoconvex domains}
\date{\today}
\begin{document}
\begin{abstract}
Let $\D=\D_1\setminus \Dc_2$, where $\D_1$ and $\D_2$ are two smooth bounded
pseudoconvex domains in $\C^n, n\geq 3,$ such that $\Dc_2\subset \D_1.$ Assume
that the $\dbar$-Neumann operator of $\D_1$ is compact and  the interior of the
Levi-flat points in the boundary of $\D_2$ is not empty (in the relative
topology). Then we show that the Hankel operator on $\D$ with symbol 
$\phi, H^{\D}_{\phi},$ is compact for every $\phi\in C(\Dc)$ but the
$\dbar$-Neumann operator on $\D$ is not compact.
\end{abstract}

\maketitle

Let $\D$ be a domain in $\C^n$ and $A^2(\D)$ denote the Bergman space on $\D$,
the space of square integrable holomorphic functions on $\D.$ The Bergman
projection, the orthogonal projection from $L^2(\D)$ onto $A^2(\D),$ is denoted 
by $P^{\D}$ and the Hankel operator  with symbol $\phi\in L^{\infty}(\D),$
denoted by $H^{\D}_{\phi},$ is defined as 
$H^{\D}_{\phi}(f)=\phi f-P^{\D}(\phi f)$ for $f\in A^2(\D).$

The $\dbar$-Neumann problem is  solving $\Box u=v$ where
$\Box=\dbar\dbar^*+\dbar^*\dbar,$ on square integrable $(0,1)$-forms and 
$\dbar^*$ is the Hilbert space adjoint of $\dbar$. We will denote the solution
operator to $\Box$ on a domain $\D$, the $\dbar$-Neumann operator on $\D$, by
$N^{\D}$. On bounded pseudoconvex domains, H\"ormander \cite{Hormander65}
showed that $N$ is a bounded operator on $L_{(0,1)}^{2}(\D)$, and Kohn
\cite{Kohn63} showed that $P^{\D}=I-\dbar^* N^{\D}\dbar.$ Therefore,
$H^{\D}_{\phi}(f)=\dbar^*N^{\D}(f\dbar\phi)$ for $f\in A^2(\D)$ and $\phi\in
C^1(\Dc).$ We refer the reader to \cite{ChenShawBook,StraubeBook} for more
information about the $\dbar$-Neumann problem and to \cite{CuckovicSahutoglu09}
(and references therein) for  more information on compactness of Hankel
operators on Bergman spaces.  

Given Kohn's formula, it is natural to expect strong connections between 
$N^{\D}$ and Hankel operators on $A^2(\D).$  For example, if $\D$ is bounded and
pseudoconvex, and $N^{\D}$ is compact on $L^2_{(0,1)}(\D)$, then 
$H_{\phi}^{\D}$ is compact on $A^2(\D)$ for all $\phi\in C(\Dc)$ (see 
\cite[Theorem 3]{Haslinger08} and \cite[Proposition 4.1]{StraubeBook}). We are
interested in the converse, which is a question of Fu and Straube \cite[Remark
2]{FuStraube01}: does compactness of $H^{\D}_{\phi}$ on $A^2(\D)$ for all
$\phi\in C(\Dc)$ imply that $N^{\D}$ is compact on $L^2_{(0,1)}(\D)$?  The
answer to this question is still open in general. However, if $\D$ is allowed to
be non-pseudoconvex  we can show that the answer is no (see Theorem
\ref{ThmMain} below).

We call $\D$ an annulus type domain if $\D=\D_1\setminus \Dc_2$ where $\D_1$ 
and $\D_2$ are smooth, bounded,  pseudoconvex, and $\Dc_2\subset \D_1.$
The following theorem of Shaw, contained in \cite[Theorem 3.5]{Shaw10},
guarantees that the $\dbar$-Neumann operator exists on  annulus
type domains in $\C^n$ for $n\geq 3$ and it is connected to the Bergman
projection the same way as it is on bounded pseudoconvex domains.

\begin{theorem*}[Shaw]
 Let $\D=\D_1\setminus \Dc_2$, where $\D_1$ and $\D_2$ are two smooth bounded
pseudoconvex domains in $\C^n,n\geq 3,$ such that $\Dc_2\subset
\D_1.$ Then 
\begin{itemize}
 \item[i.] $N^{\D}_{q}$ exists on $L^2_{(0,q)}(\D)$ for  $1\leq q\leq n-2,$ 
\item[ii.] $\dbar^*N^{\D}$ is the canonical solution operator for $\dbar$,
\item[iii.] $P^{\D}=I-\dbar^*N^{\D}\dbar$.
\end{itemize}
\end{theorem*}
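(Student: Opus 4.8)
\section*{Proof proposal}

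The plan is to reduce everything to two $L^2$ facts about $\dbar$ on $\D$: (a) the $L^2$-Dolbeault cohomology $H^q_{L^2}(\D)$ vanishes for $1\le q\le n-2$, and (b) $\dbar$ has closed range at each level, i.e.\ $\dbar_q\colon L^2_{(0,q)}(\D)\to L^2_{(0,q+1)}(\D)$ has closed range for $0\le q\le n-2$. Granting (a) and (b), the existence of $N^{\D}_q$ in (i) is the standard functional-analytic consequence: closed range of $\dbar_{q-1}$ and $\dbar_q$ makes $\Box_q=\dbar\dbar^*+\dbar^*\dbar$ a densely defined self-adjoint operator with closed range, and the vanishing in (a) forces the harmonic space $\mathcal{H}^q=\ker\dbar\cap\ker\dbar^*$ to be trivial for $1\le q\le n-2$. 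Hence $\Box_q$ is injective with closed range equal to its full target, and its inverse $N^{\D}_q$ is bounded on all of $L^2_{(0,q)}(\D)$ (cf.\ \cite{Hormander65,StraubeBook}). I would carry this out first, since it is formal once (a) and (b) are in hand.

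The substance is (a)--(b), and here I would exploit the decomposition $\D=\D_1\cap W$ with $W=\C^n\setminus\Dc_2$, together with $\D_1\cup W=\C^n$, which holds because $\Dc_2\subset\D_1$. On the pseudoconvex piece $\D_1$, H\"ormander's $L^2$ estimates give solvability of $\dbar$ with estimates in every degree $q\ge 1$. On the pseudoconcave piece $W$, the key input is that the complement of a compact pseudoconvex set carries no $L^2$ cohomology in degrees $1\le q\le n-2$; this is where $n\ge 3$ and $q\le n-2$ enter, since the top degree $n-1$ genuinely fails to vanish in general. Patching the two solutions across the overlap, a quantitative $L^2$ Mayer--Vietoris argument, yields (a). The top-degree closed-range statement needed in (b), namely closed range of $\dbar_{n-2}\colon L^2_{(0,n-2)}(\D)\to L^2_{(0,n-1)}(\D)$, does not follow from vanishing, and I would secure it by $L^2$ Serre/Andreotti--Vesentini duality, pairing $H^{n-1}_{L^2}(W)$ with data on the pseudoconvex $\D_2$, whose Hausdorffness is guaranteed by pseudoconvexity. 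Alternatively, (a)--(b) can be attacked directly through the Morrey--Kohn--H\"ormander identity on $\D$, whose boundary integral splits into a nonnegative contribution from the pseudoconvex outer boundary $b\D_1$ and a nonpositive one from the pseudoconcave inner boundary $b\D_2$ (the Levi form of a defining function $-\rho_2$ of $\D$ has the wrong sign there); the degree restriction $q\le n-2$ is precisely what lets the negative $b\D_2$-term be controlled.

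With (i) in hand, statements (ii) and (iii) follow by the usual algebra, and, crucially, without invoking $N^{\D}_q$ at the top degree. For (ii), let $g\in L^2_{(0,1)}(\D)$ with $\dbar g=0$; from $g=\Box N^{\D}g=\dbar\dbar^*N^{\D}g+\dbar^*\dbar N^{\D}g$ one gets $\dbar(\dbar^*\dbar N^{\D}g)=\dbar g=0$, so $v:=\dbar^*\dbar N^{\D}g$ lies in $\ker\dbar$; on the other hand $v\in\overline{\operatorname{range}\dbar^*}=(\ker\dbar)^\perp$, whence $v=0$. Therefore $\dbar(\dbar^*N^{\D}g)=g$, and since $\dbar^*N^{\D}g\in(\ker\dbar)^\perp$ it is the minimal-norm, i.e.\ canonical, solution; note this uses only the closed range from (b), not the existence of $N^{\D}_2$, which is the point for $n=3$. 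For (iii), I would apply (ii) with $g=\dbar f$, $f\in L^2(\D)$: then $\dbar^*N^{\D}\dbar f$ is the canonical solution of $\dbar u=\dbar f$, so $f-\dbar^*N^{\D}\dbar f$ is holomorphic while $\dbar^*N^{\D}\dbar f\perp A^2(\D)$; hence $f-\dbar^*N^{\D}\dbar f=P^{\D}f$, which is exactly $P^{\D}=I-\dbar^*N^{\D}\dbar$.

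The main obstacle is the analytic core of (a)--(b) on the non-pseudoconvex domain $\D$: the inner boundary $b\D_2$ is pseudoconcave from the side of $\D$, so the boundary term in the basic identity is negative there and the pseudoconvex theory does not apply directly. Turning the topological vanishing/Mayer--Vietoris picture into closed-range $L^2$ estimates with uniform constants, and in particular securing the top-degree closed range via duality, is the step I expect to be most delicate; it is also the step that pins down the hypotheses $n\ge 3$ and $1\le q\le n-2$.
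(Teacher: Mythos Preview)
The paper does not give its own proof of this statement; Shaw's theorem is quoted from \cite{Shaw10} (Theorem~3.5 there) as a black box, and the paper's original work (Lemmas~\ref{LemCompEstimate}--\ref{LemHankelCompact} and Theorem~\ref{ThmMain}) simply invokes it. So there is nothing in the paper to compare your proposal against.

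That said, your outline is a reasonable sketch of how such a result is established, and your derivation of (ii) and (iii) from (i) plus closed range is correct and standard; you also correctly isolate the hypothesis $n\ge 3$, $1\le q\le n-2$ as exactly what forces the harmonic space to be trivial (the paper itself notes that $\mathcal{H}^{\D}_{(0,1)}$ is infinite-dimensional when $n=2$). Of your two suggested routes to the analytic core, the direct Morrey--Kohn--H\"ormander approach on the annulus (your ``alternative'') is closer in spirit to Shaw's actual argument. The $L^2$ Mayer--Vietoris route you sketch first is plausible but would need to be formulated more carefully: $W=\C^n\setminus\Dc_2$ is unbounded, so the assertion that ``$W$ carries no $L^2$ cohomology in degrees $1\le q\le n-2$'' is not well-posed as stated and would have to be recast (e.g.\ via $\dbar$ with compact support, or $L^2_{\mathrm{loc}}$ data, together with the $L^2$ Serre-type duality you allude to) before it can be patched with the H\"ormander solution on $\D_1$.
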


In fact Shaw (\cite[Theorem 3.5]{Shaw10}) showed that the $\dbar$-Neumann
operator is bounded on $(0,1)$-forms for $n\geq 2.$ However, the space of
harmonic forms $\mathcal{H}^{\D}_{(0,1)},$ defined in the next section, is
infinite dimensional when $n=2$ and trivial when $n\geq 3.$ Hence, when $n=2$
items ii. and iii. in Shaw's theorem above are not valid.  

The following theorem is our main result. We note that $B(p,r)$ denotes the open
ball centered at $p$ with radius $r,$ and a point $p,$ in the boundary of a
smooth domain $\D\subset \C^n$, is called Levi-flat if the Levi form of $\D,$
the restriction of the complex Hessian of a defining function onto complex
tangent space, is constant zero at $p.$ We denote the boundary of a domain $\D$
by $b\D.$

\begin{theorem}\label{ThmMain}
Let $\D=\D_1\setminus \Dc_2$, where $\D_1$ and $\D_2$ are two smooth bounded
pseudoconvex domains in $\C^n, n\geq 3,$ such that $\Dc_2\subset \D_1.$ Assume
that the $\dbar$-Neumann operator $N ^{\D_1}$ is compact on 
$L_{(0,1)}^{2}(\D_1)$ and  that  there exists a ball, $B(p,r)$ centered   $p\in
b\D_2$ with radius $r>0$ such that $B(p,r)\cap b\D_2$ is a Levi-flat surface.
Then the Hankel operator $H^{\D}_{\phi}$ is compact on $A^{2}(\D)$ 
for every $\phi\in C(\Dc)$ but the $\dbar$-Neumann operator
$N ^{\D}$ is not compact on  $L_{(0,1)}^{2}(\D).$ 
\end{theorem}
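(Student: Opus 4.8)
The plan is to establish the two assertions separately, since they pull in opposite directions. For the positive statement — compactness of all Hankel operators $H^{\D}_{\phi}$ with $\phi\in C(\Dc)$ — I would first reduce to $\phi\in C^1(\Dc)$ by the usual density argument: Hankel operators depend continuously in operator norm on the symbol in the sup norm, and the compact operators form a closed subspace, so it suffices to treat smooth symbols. For $\phi\in C^1(\Dc)$, Shaw's theorem gives $H^{\D}_{\phi}(f)=\dbar^*N^{\D}(f\,\dbar\phi)$, so it is enough to show that $f\mapsto \dbar^*N^{\D}(f\,\dbar\phi)$ is compact on $A^2(\D)$. The natural route is a \emph{compactness estimate}: show that the composed map $A^2(\D)\ni f\mapsto f\,\dbar\phi\in L^2_{(0,1)}(\D)$ followed by the canonical solution operator is compact. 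I would try to obtain this from a compactness estimate for $\D_1$ together with the fact that near $b\D_2$ the relevant $(0,1)$-forms $f\,\dbar\phi$ are $\dbar$-closed and $f$ is holomorphic, so one can compare the canonical solutions on $\D$ and on $\D_1$ via extension/restriction of $A^2$ functions across the Levi-flat hole (Hartogs-type extension for $n\ge 3$, or rather: every $f\in A^2(\D)$ restricted away from $b\D_2$ differs from a function in $A^2(\D_1)$ by something controlled). Concretely, because $\dbar\phi$ is bounded and compactness of $N^{\D_1}$ is equivalent to a compactness estimate $\norm{u}^2\le \varepsilon\,(\norm{\dbar u}^2+\norm{\dbar^* u}^2)+C_\varepsilon\norm{u}^2_{-1}$ on $\mathrm{dom}(\dbar)\cap\mathrm{dom}(\dbar^*)$, I would transplant this to $\D$ using that $\D$ and $\D_1$ agree near $b\D_1$ and that the only new boundary is the Levi-flat $b\D_2$, on which the canonical solution of a $\dbar$-closed form coming from $A^2(\D)$ is automatically well-behaved. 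This yields compactness of $H^{\D}_{\phi}$.

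For the negative statement — noncompactness of $N^{\D}$ — the idea is to exhibit an explicit sequence in $\mathrm{dom}(\dbar)\cap\mathrm{dom}(\dbar^*)$ that violates every compactness estimate, localized at the Levi-flat piece $B(p,r)\cap b\D_2$. After a local holomorphic change of coordinates, arrange that near $p$ the boundary $b\D_2$ looks like $\{\mathrm{Re}\,z_n=0\}$, so $\D$ locally contains a piece of $\{\mathrm{Re}\,z_n>0\}\times(\text{ball in }z'')$ — but from the side of $\D_2$, i.e.\ $\D$ lies on the \emph{pseudoconcave} side of this flat hypersurface. On such a flat piece one builds $(0,1)$-forms $u_j=\chi(z)e^{ij z_n}\,\dbar\rho$ (with $\rho$ a defining function, $\chi$ a fixed cutoff supported in $B(p,r)$), or more precisely forms concentrated in the $z_n$-direction with frequency $j\to\infty$; these have $\norm{u_j}\approx 1$, go weakly to zero, and satisfy $\norm{\dbar u_j}+\norm{\dbar^* u_j}$ bounded — the point being that the flat Levi form kills the positive curvature term that would otherwise force $\norm{\dbar^* u_j}\to\infty$, exactly as in the standard argument that $N$ fails to be compact on domains containing a flat piece of the boundary that is not "hidden." Because there are no harmonic forms ($n\ge3$), $N^{\D}$ is the genuine inverse of $\Box$ on the range, and the failure of the compactness estimate is equivalent to noncompactness of $N^{\D}$.

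The main obstacle is the positive half: transferring compactness of Hankel operators from $\D_1$ to the annulus $\D$ despite the new, bad (Levi-flat) boundary component. The subtlety is that $N^{\D}$ itself is \emph{not} compact (that is the whole point), so one cannot simply quote a compactness estimate on $\D$; instead one must exploit that the forms arising as $f\,\dbar\phi$ with $f\in A^2(\D)$ are $\dbar$-closed, and that the canonical solution operator $\dbar^*N^{\D}$ restricted to $\dbar$-closed forms behaves better near the Levi-flat hole than $N^{\D}$ does on general forms — essentially because holomorphic functions on $\D$ cannot concentrate at $b\D_2$ in the way the test forms $u_j$ above do (any such concentration would, via Cauchy estimates across the hole in $\C^n$, $n\ge3$, be controlled by behavior on a fixed compact subset of $\D$). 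Making this dichotomy precise — compactness on $\dbar$-closed forms versus noncompactness on all forms — is the technical heart of the argument, and I expect it to go through a decomposition of $\D$ into a neighborhood of $b\D_1$ (where one uses compactness of $N^{\D_1}$) and a neighborhood of $\Dc_2$ (where one uses holomorphy and a Rellich/Cauchy-estimate argument to gain compactness for the restricted operator).
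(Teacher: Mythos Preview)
Your plan for the first half---compactness of $H^{\D}_{\phi}$---is correct and contains the key idea the paper uses, namely Hartogs extension of $A^2(\D)$ into $A^2(\D_1)$; but you bury it under a compactness-estimate framework that is not needed. The paper's argument is a one-line operator factorization: since $n\ge 2$ there is a bounded extension $E:A^2(\D)\to A^2(\D_1)$, and for $\phi\in C^1$ one checks directly that $R_{\D}H^{\D_1}_{\phi}E f$ solves $\dbar u=f\dbar\phi$ on $\D$, whence $H^{\D}_{\phi}=(I-P^{\D})R_{\D}H^{\D_1}_{\phi}E$ is compact because $H^{\D_1}_{\phi}$ is. No transplanting of estimates, no decomposition near the two boundaries. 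Your ``holomorphic functions cannot concentrate at $b\D_2$'' is precisely the boundedness of $E$, so you have the ingredient; just use it operator-theoretically. The $C^1\to C$ density step is the same in both.

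For noncompactness of $N^{\D}$ your route genuinely differs from the paper's. The paper does not build test forms on $\D$; instead it proves a localization lemma (compactness of $N^{\D}$ implies compactness of $N^{U}$ for $U=\D\cap B(p,r)$), observes that with $B(p,r)\cap b\D_2$ Levi-flat the domain $U$ is \emph{pseudoconvex} with an $(n-1)$-dimensional complex manifold in its boundary, and invokes Catlin's theorem as a black box. Your direct construction can also be made to work---the forms are supported in $B(p,r)$, where $b\D$ is Levi-flat, so the global non-pseudoconvexity of $\D$ is irrelevant, and since $n\ge 3$ there are no harmonic forms to worry about---but your proposed forms $u_j=\chi e^{ijz_n}\dbar\rho$ are wrong: $\dbar\rho$ is the \emph{normal} $(0,1)$-direction, so these $u_j$ fail the $\dbar^*$-Neumann boundary condition and are not in $\mathrm{Dom}(\dbar^*)$. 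The standard construction uses \emph{tangential} forms, e.g.\ $\chi f_j\, d\bar z_k$ with $k<n$ and $f_j$ holomorphic and concentrating at the flat piece (say $f_j=c_j e^{-jz_n}$ in the local model with $\D$ on the side $\mathrm{Re}\,z_n>0$); then the boundary condition is automatic, $\dbar u_j$ and $\dbar^* u_j$ involve only derivatives of $\chi$, and the usual mass-concentration argument goes through. So your approach is viable once the test forms are corrected, and it has the advantage of being self-contained; the paper's approach is shorter because it outsources the construction to Catlin's theorem via a localization step.
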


See Remark \ref{InfiniteDimentionalKernelBox} for an explanation of why we
stated the above theorem for domains in $\C^n$ for $n\geq 3.$ 

\begin{remark}
Hankel operators are closely connected to a very important class of
operators called Toeplitz operators. The Toeplitz operator on $A^2(\D)$ with
symbol $\phi\in L^{\infty}(\D),$ denoted by $T^{\D}_{\phi},$ is  defined as
$T^{\D}_{\phi}f=P^{\D}(\phi f)=\phi f-H^{\D}_{\phi}f$ for $f\in A^2(\D).$   
Let $\phi\in C(\overline{\D})$ such that  $\phi(z)\neq 0$ for
$z\in b\D_1.$ Choose $\psi,\phi_1\in C(\overline{\D})$ such that
$\phi_1(z)=\phi(z)$ for $z\in b\D_1$ and  $|\phi_1|>0$ on 
$\overline{\D},$ and $\psi=1/\phi_1.$ Then 
\[(T^{\D}_{\phi}-T^{\D}_{\phi_1}+T^{\D}_{\phi_1})T^{ \D}_{ \psi}=
T^{\D}_{\phi-\phi_1}T^{\D}_{\psi}+T^{\D}_{\phi_1\psi}
-P^{\D}M_{\phi_1}H^{\D}_{\psi}=I+K \]
where $K=T^{\D}_{\phi-\phi_1}T^{\D}_{\psi}-P^{\D}M_{\phi_1} H^{\D}_{\psi}.$  
Now assume that  $\D,\D_1,$ and $\D_2$ are as in Theorem \ref{ThmMain}. 
Then $K$ is a compact operator ($T^{\D}_{\phi-\phi_1}$ is compact because
$\phi_1-\phi=0$ on the outer boundary of $\D$ and  $H^{\D}_{\psi}$ is  compact
by Theorem \ref{ThmMain}). Hence, $T_{\phi}^{\D}$ is Fredholm for any
$\phi\in C(\overline{\D})$ with the property that $\phi(z)\neq 0$ for
$z\in b\D_1.$ Fredholm property of Toeplitz operators on some pseudoconvex
domains in $\C^n$ has been studied by several authors (see, for example, 
\cite{Venugopalkrishna72,HenkinIordan97}).
\end{remark}

\begin{remark}
Hankel operators can also be expressed as commutators of the Bergman projection
with multiplication operators. These commutators proved to be useful in the
proof of  the complex version of Hilbert's seventeenth problem (see
\cite{CatlinD'Angelo97}). For more information about relations between the
commutators and the $\dbar$-Neumann problem we refer the reader to \cite[Chapter
4.1]{StraubeBook}. The computation is as follows:
\[ \langle P^{\D} (\phi g), h \rangle_{L^2(\D)} = \langle \phi g, h \rangle
_{L^2(\D)}=\langle g, H^{\D}_{\overline{\phi}}h \rangle_{L^2(\D)} = \langle
(H^{\D}_{\overline{\phi}})^*g, h \rangle_{L^2(\D)} \]
for $\phi \in L^{\infty} (\D), h \in A^2(\D),$ and $g \bot A^2(\D).$ Hence for
any $f\in L^2(\D)$ we have
\begin{align*} 
 [M_{\phi}, P^{\D}](f) &=[M_{\phi}, P^{\D}](P^{\D}f) +[M_{\phi},
P^{\D}]((I-P^{\D})f)\\
 &= H^{\D}_{\phi}P^{\D}f-P^{\D}M_{\phi}(I-P^{\D})f\\
 &= H^{\D}_{\phi}P^{\D}f-(H^{\D}_{\overline{\phi}})^*(I-P^{\D})f.
\end{align*} 
When $f\in A^2(\D),P^{\D}f=f,$ and $(I-P^{\D})f=0,$ whence
$H_{\phi}^{\D}=[M_{\phi},P^{\D}]$ on   $A^2(\D).$ Note that
$(H^{\D}_{\overline{\phi}})^*:L^2(\D)\to A^2(\D)$ and it is compact if and only
if $H^{\D}_{\overline{\phi}}$ is compact. Therefore, $H_{\phi}^{\D}$ is compact
on $A^2({\D})$ if and only if $[M_{\phi},P^{\D}]$ is compact on $L^2(\D).$ We
note that similar calculations as well as related issues appeared in
\cite{Haslinger08} on pseudoconvex domains (see also
\cite{CatlinD'Angelo97,FuStraube01}).
\end{remark}

\begin{corollary} \label{Cor}  
Let $\D,\D_1,$ and $\D_2$ be as in Theorem \ref{ThmMain}. Then  the commutator
$[M_{\phi}, P^{\D}]$ is compact on $L^{2}(\D)$  for every  
$\phi \in C(\overline{\D})$ but the $\dbar$-Neumann operator $N ^{\D}$ is not
compact on $L_{(0,1)}^{2}(\D).$ 
\end{corollary}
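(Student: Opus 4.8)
The plan is to read off both assertions directly from Theorem \ref{ThmMain} together with the commutator identities derived in the second remark above. The non-compactness of $N^{\D}$ on $L^2_{(0,1)}(\D)$ is literally part of the conclusion of Theorem \ref{ThmMain}, so nothing remains to be done there. For the compactness of $[M_{\phi},P^{\D}]$ on all of $L^2(\D)$, I would recall the decomposition
\[ [M_{\phi},P^{\D}](f)= H^{\D}_{\phi}P^{\D}f-(H^{\D}_{\overline{\phi}})^{*}(I-P^{\D})f, \qquad f\in L^2(\D), \]
which is valid for any $\phi\in L^{\infty}(\D)$. Since $P^{\D}$ and $I-P^{\D}$ are bounded on $L^2(\D)$, the right-hand side is a compact operator as soon as both $H^{\D}_{\phi}$ and $(H^{\D}_{\overline{\phi}})^{*}$ are compact, and $(H^{\D}_{\overline{\phi}})^{*}\colon L^2(\D)\to A^2(\D)$ is compact precisely when $H^{\D}_{\overline{\phi}}$ is.

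The only remaining point is to check that both Hankel operators appearing here are covered by Theorem \ref{ThmMain}. This is immediate: if $\phi\in C(\Dc)$ then also $\overline{\phi}\in C(\Dc)$, and the hypotheses imposed on $\D,\D_1,\D_2$ in the corollary are exactly those of Theorem \ref{ThmMain}; hence that theorem yields that $H^{\D}_{\phi}$ and $H^{\D}_{\overline{\phi}}$ are both compact on $A^2(\D)$. Plugging this into the displayed decomposition shows $[M_{\phi},P^{\D}]$ is compact on $L^2(\D)$, which completes the proof.

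Since every step merely quotes an already-established fact, I do not expect any genuine obstacle; the mathematical content of the corollary is entirely carried by Theorem \ref{ThmMain}, and the corollary only repackages it in the language of commutators through the equivalence $H^{\D}_{\phi}\leftrightarrow[M_{\phi},P^{\D}]$ recorded in the second remark.
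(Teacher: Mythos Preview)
Your proposal is correct and is exactly the argument the paper intends: the corollary has no separate proof in the paper because it follows immediately from Theorem~\ref{ThmMain} via the commutator identity and the equivalence $H^{\D}_{\phi}\text{ compact}\Leftrightarrow[M_{\phi},P^{\D}]\text{ compact}$ established in the second remark. Your explicit observation that $\overline{\phi}\in C(\Dc)$ whenever $\phi\in C(\Dc)$, so that Theorem~\ref{ThmMain} covers both Hankel operators in the decomposition, is precisely the point needed to close the argument.
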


\begin{example} Here, we give an explicit example. 
Let $\lambda_1(t)= 0$ for $t\leq 0$ and 
$\lambda_1(t)= e^{-1/t}$ for $t>0$ and 
\[\lambda(z_1,z_2,z_3)=\lambda_1\left(|z_1|^2-\frac{1}{4}\right)
+\lambda_1\left(|z_2|^2-\frac{1}{4}\right)+
\lambda_1\left(|z_3|^2-\frac{1}{4}\right)-e^{-3}.\]
One can check that $\lambda_1$ is a convex function on $(-\infty,1/2).$ Let
us define 
\[ \D=\left\{(z_1,z_2,z_3)\in \C^3: |z_1|^2+|z_2|^2+|z_3|^2<9\text{ and }
\lambda(z_1,z_2,z_3)>0 \right\}.\]
So $\D_1= B(0,3),\D_2= \left\{(z_1,z_2,z_3)\in \C^3: \lambda(z_1,z_2,z_3)<0
\right\},$ and $\D=\D_1\setminus \Dc_2$ is a smooth bounded annulus type domain
in $\C^3.$ By construction $b\D\cap B((0,0,\sqrt{7}/\sqrt{12}),1/3)$ is a
Levi-flat surface. Then 
Theorem \ref{ThmMain} and Corollary \ref{Cor} imply that $[M_{\phi}, P^{\D}]$ is
compact on $L^{2}(\D)$ for every $\phi \in C(\overline{\D})$ (hence
$H^{\D}_{\phi}$ is compact on $A^{2}(\D)$ for every $\phi \in C(\overline{\D})$)
but  $N^{\D}$ is not compact on $L^{2}_{(0,1)}(\D).$
\end{example}

\section*{Proof of Theorem \ref{ThmMain}}
Let 
\[\mathcal{H}^{\D}_{(0,1)}=\mbox{ker}(\Box^{\D})=\{u\in\mbox{Dom}(\overline{
\partial})\cap\mbox{Dom}(\overline{\partial}^{*}) :
\overline{\partial}u=0,\ \overline{\partial}^{*}u=0\} \subset 
L^2_{(0,1)}(\D). \]
We call $\mathcal{H}^{\D}_{(0,1)}$ the space of harmonic $(0,1)$-forms and
denote $H^{\D}$ the orthogonal projection from $L^2_{(0,1)}(\D)$ onto
$\mathcal{H}^{\D}_{(0,1)}.$  The following Lemmas will be useful in the
proof of Theorem  \ref{ThmMain}.

\begin{lemma} \label{LemCompEstimate}
Let $\D$ be an annulus type domain in $\C^n$ for $n\geq 2.$  Then $N^{\D}$ is
compact on $L^2_{(0,1)}(\D)$ if and only if for every $\varepsilon>0$ there
exists $C_{\varepsilon}>0$ such that 
\begin{equation}\label{CompEstimate}
\| u\|^2\leq \varepsilon \left(\|\dbar u\|^2 +\|\dbar^*u \| ^2\right)+\| H^{\D}
u\|^2+C_{\varepsilon} \| u-H^{\D}u\|^2_{-1} 
\end{equation}
 for  $u\in Dom(\dbar)\cap Dom(\dbar^*)\subset L^2_{(0,1)}(\D).$
\end{lemma}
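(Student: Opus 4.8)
The plan is to recognize \eqref{CompEstimate} as the standard compactness estimate for the $\dbar$-Neumann problem, adapted to the annulus setting where the space of harmonic $(0,1)$-forms $\mathcal{H}^{\D}_{(0,1)}$ need not be trivial (it is infinite dimensional when $n=2$). So the first step is to separate off the harmonic part. Given $u\in\mbox{Dom}(\dbar)\cap\mbox{Dom}(\dbar^*)$, put $v=u-H^{\D}u$. Since $H^{\D}u\in\mathcal{H}^{\D}_{(0,1)}$ lies in $\mbox{Dom}(\dbar)\cap\mbox{Dom}(\dbar^*)$ and is annihilated by $\dbar$ and $\dbar^*$, the form $v$ is again in $\mbox{Dom}(\dbar)\cap\mbox{Dom}(\dbar^*)$, is orthogonal to $\mathcal{H}^{\D}_{(0,1)}$, satisfies $\dbar v=\dbar u$, $\dbar^*v=\dbar^*u$, and $\|u\|^2=\|H^{\D}u\|^2+\|v\|^2$. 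These identities show that \eqref{CompEstimate} for all $u$ is equivalent to the estimate
\begin{equation}\label{CompEstimatePerp}
\|v\|^2\leq\varepsilon\bigl(\|\dbar v\|^2+\|\dbar^*v\|^2\bigr)+C_{\varepsilon}\|v\|^2_{-1}
\end{equation}
holding for all $v\in\mbox{Dom}(\dbar)\cap\mbox{Dom}(\dbar^*)$ with $v\perp\mathcal{H}^{\D}_{(0,1)}$ (restrict \eqref{CompEstimate} to such $v$ for one implication; apply \eqref{CompEstimatePerp} to $u-H^{\D}u$ for the other). Since $N^{\D}$ vanishes on $\mathcal{H}^{\D}_{(0,1)}$, compactness of $N^{\D}$ on $L^2_{(0,1)}(\D)$ amounts to compactness of its restriction to $(\mathcal{H}^{\D}_{(0,1)})^{\perp}$, so it remains to show that this restriction is compact if and only if \eqref{CompEstimatePerp} holds.

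For that I would work in the Hilbert space $W=\mbox{Dom}(\dbar)\cap\mbox{Dom}(\dbar^*)\cap(\mathcal{H}^{\D}_{(0,1)})^{\perp}$ with inner product $Q(v,w)=\langle\dbar v,\dbar w\rangle+\langle\dbar^*v,\dbar^*w\rangle$. This is where Shaw's theorem is used: the $\dbar$-Neumann operator on $\D$ is bounded on $(0,1)$-forms (\cite[Theorem 3.5]{Shaw10}), which, since $\ker(\Box^{\D})=\mathcal{H}^{\D}_{(0,1)}$, is equivalent to a coercivity estimate $Q(v,v)\geq\delta\|v\|^2$ on $W$ for some $\delta>0$ (equivalently, $\Box^{\D}$ has closed range $(\mathcal{H}^{\D}_{(0,1)})^{\perp}$). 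Hence the $Q$-norm is equivalent to the graph norm on $W$, so $(W,Q)$ is complete and the inclusion $\iota\colon W\hookrightarrow L^2_{(0,1)}(\D)$ is bounded; moreover the defining relation $Q(N^{\D}v,w)=\langle v,w\rangle$ for $w\in W$, together with $N^{\D}=0$ on $\mathcal{H}^{\D}_{(0,1)}$, shows that $N^{\D}=\iota\iota^*$ on $L^2_{(0,1)}(\D)$, where $\iota^*$ is the Hilbert space adjoint of $\iota$. Therefore $N^{\D}$ is compact if and only if $\iota$ is compact.

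The final step is the classical three-space lemma applied to the continuous inclusions $W\hookrightarrow L^2_{(0,1)}(\D)\hookrightarrow W^{-1}_{(0,1)}(\D)$, the second of which is compact by Rellich's lemma on the bounded domain $\D$: $\iota$ is compact if and only if for every $\varepsilon>0$ there is $C_{\varepsilon}>0$ with $\|v\|^2\leq\varepsilon\,Q(v,v)+C_{\varepsilon}\|v\|^2_{-1}$ for all $v\in W$, which is precisely \eqref{CompEstimatePerp}. One direction: a $W$-bounded sequence is $L^2$-bounded, hence admits a $\|\cdot\|_{-1}$-Cauchy subsequence by Rellich, which \eqref{CompEstimatePerp} promotes to an $L^2$-Cauchy subsequence. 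The other direction is the usual argument by contradiction: a normalized $v_j\in W$ with $Q(v_j,v_j)$ bounded and $\|v_j\|_{-1}\to 0$ would, by compactness of $\iota$, converge in $L^2$ to a unit vector vanishing in $W^{-1}$. This is the exact analogue of the pseudoconvex case treated in \cite[Chapter 4]{StraubeBook}.

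I expect the functional analysis to be entirely routine; the only substantive ingredient is Shaw's theorem, which supplies the coercivity of $Q$ on $W$ (equivalently, closed range of $\Box^{\D}$) and is genuinely special to the annulus setting. The points needing care are the bookkeeping with $H^{\D}$ when passing between \eqref{CompEstimate} and \eqref{CompEstimatePerp}, and confirming that the $\|\cdot\|_{-1}$ in the statement is the dual Sobolev norm for which Rellich's lemma applies on the smooth bounded domain $\D$.
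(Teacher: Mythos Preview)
Your proposal is correct and follows essentially the same approach as the paper: both reduce to the orthogonal complement of harmonic forms, equip $W=\mbox{Dom}(\dbar)\cap\mbox{Dom}(\dbar^*)\cap(\mathcal{H}^{\D}_{(0,1)})^{\perp}$ with the graph norm, invoke Shaw's closed-range theorem to make the inclusion $\iota$ continuous, identify $N^{\D}=\iota\iota^*$, and then appeal to the standard equivalence between compactness of $\iota$ and the family of estimates (the paper cites \cite[Theorem 2.9 and Proposition 4.2]{StraubeBook} for these last two steps, which you spell out directly). The only cosmetic difference is the order: you pass from \eqref{CompEstimate} to the reduced estimate first, whereas the paper derives the reduced estimate and then substitutes $u-H^{\D}u$ at the end.
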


\begin{proof} 
We note that $\dbar$ has closed range in $L^2_{(0,1)}(\D)$ 
(see \cite[Theorem 3.3]{Shaw10}). Let us define 
\[\Gamma=Dom(\dbar) \cap
Dom(\dbar^*)\cap \left(\mathcal{H}^{\D}_{(0,1)}\right)^{\bot}\] 
($X^{\bot}$ denotes the orthogonal complement of $X$) and  equip the space
$\Gamma$ with the graph norm. That is, 
$\|u\|^2_{\Gamma}=\|\dbar u\|^2+\|\dbar^* u\|^2.$ Then the embedding  
$j:\Gamma \hookrightarrow L^2_{(0,1)}(\D)$ is continuous \cite{Shaw10}. 
Furthermore,  $N=j\circ j^*$ (for a proof of this see
\cite[Theorem 2.9]{StraubeBook}. Although pseudoconvexity is assumed in 
\cite[Theorem 2.9]{StraubeBook} its proof applies in our situation as well
because $j$ is a bounded operator). Hence $N$
is compact if and only if $j$ is compact and compactness of $j$ is equivalent to
the following estimate (\cite[Proposition 4.2]{StraubeBook}): for
all $\varepsilon>0$ there exists $C_{\varepsilon}>0$ such that 
\[  \|u\|^2 \leq \varepsilon \left(\|\dbar u\|^2+\|\dbar^* u\|^2\right)
+C_{\varepsilon} \|u\|^2_{-1} \mbox{ for } u\in \Gamma. \]
One can substitute $u-H^{\D}u$ instead of $u$ above to show that the inequality
above is equivalent to \eqref{CompEstimate}.
\end{proof}

\begin{lemma} \label{LemLocalization}
Let $\D$ be an annulus type domain in $\C^n$ for $ n\geq 3$ such that
$N^{\D}$ exists and it is compact  on $L^2_{(0,1)}(\D).$ Let $p$ be a boundary
point of $\D$ and $r>0$ such that $U=\D\cap B(p,r)$ is a pseudoconvex domain. 
Then $N^{U}$ is compact on $L^2_{(0,1)}(U).$ 
\end{lemma}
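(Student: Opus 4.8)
The plan is to exploit the compactness estimate from Lemma \ref{LemCompEstimate} on $\D$, localize it to $U$ via a cutoff function, and then compare the harmonic spaces of $\D$ and $U$. First I would fix a smooth cutoff $\chi$ supported in $B(p,r)$ with $\chi\equiv 1$ on a smaller ball $B(p,r')$, chosen so that $V:=\D\cap B(p,r')$ is still a neighborhood of $p$ in $\D$ on which we want compactness of $N^{V}$; by the standard fact that compactness of the $\dbar$-Neumann operator is a local property near the boundary (it suffices to have the compactness estimate locally, and in the interior $N$ is automatically ``compact'' in the relevant sense), it is enough to prove the compactness estimate on $U$ for forms supported near $p$. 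Given $u\in\mathrm{Dom}(\dbar)\cap\mathrm{Dom}(\dbar^*)\subset L^2_{(0,1)}(U)$ supported in $B(p,r')$, extend $\chi u$ by zero to a form on $\D$; since $\chi u$ is compactly supported in $B(p,r)$ it lies in $\mathrm{Dom}(\dbar)\cap\mathrm{Dom}(\dbar^*)$ on $\D$ (the $\dbar^*$ boundary condition on $b\D\cap B(p,r)=bU\cap B(p,r)$ is inherited, and there is no contribution from the rest of $b\D$), and $\dbar(\chi u)=\dbar u$, $\dbar^*(\chi u)=\dbar^* u$ on the support of $u$ where $\chi\equiv 1$.

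Next I would apply inequality \eqref{CompEstimate} on $\D$ to $v:=\chi u$. The key point is the harmonic term: since $n\geq 3$, Shaw's theorem (as quoted after the statement of the main theorem) gives $\mathcal{H}^{\D}_{(0,1)}=\{0\}$, so $H^{\D}v=0$ and the terms $\|H^{\D}v\|^2$ and $\|v-H^{\D}v\|^2_{-1}=\|v\|^2_{-1}$ collapse to just a single negative-Sobolev term. Thus
\[
\|u\|^2_{L^2(U)}=\|v\|^2_{L^2(\D)}\leq \varepsilon\bigl(\|\dbar v\|^2+\|\dbar^* v\|^2\bigr)+C_\varepsilon\|v\|^2_{-1}.
\]
Now $\|\dbar v\|^2+\|\dbar^* v\|^2\leq 2\bigl(\|\dbar u\|^2+\|\dbar^* u\|^2\bigr)+C\|u\|^2_{L^2}$ by the Leibniz rule (the terms involving $\dbar\chi$ are zeroth order in $u$), and $\|v\|_{-1}=\|\chi u\|_{-1}\leq C\|u\|_{-1}$ since multiplication by a fixed smooth function is bounded on $H^{-1}$. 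Absorbing: choosing $\varepsilon$ appropriately and moving the $C\|u\|^2$ term to the left using that it can itself be dominated by $\varepsilon(\cdots)+C_\varepsilon\|u\|^2_{-1}$ via interpolation (the Ehrling-type inequality $\|u\|^2\leq \varepsilon\|u\|^2_1 + C_\varepsilon\|u\|^2_{-1}$, with $\|u\|^2_1$ controlled by $\|\dbar u\|^2+\|\dbar^* u\|^2 + \|u\|^2$ through the basic estimate on the pseudoconvex domain $U$), yields the compactness estimate on $U$ for forms supported near $p$, hence compactness of $N^{U}$ by Lemma \ref{LemCompEstimate} applied on $U$ (where $\mathcal{H}^{U}_{(0,1)}=\{0\}$ as well since $U$ is pseudoconvex, so the criterion reduces to the classical one).

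The main obstacle I anticipate is the bookkeeping in the absorption step: one must be careful that the zeroth-order error terms produced by $\dbar\chi$ are genuinely controllable by an $\varepsilon$-small multiple of the graph norm plus a negative-norm term, which requires invoking the elliptic/subelliptic a priori bound $\|u\|_1^2\lesssim \|\dbar u\|^2+\|\dbar^* u\|^2+\|u\|^2$ valid on the pseudoconvex domain $U$ for forms in $\mathrm{Dom}(\dbar)\cap\mathrm{Dom}(\dbar^*)$ (this is the standard basic estimate / Gårding inequality for the $\dbar$-Neumann problem, genuinely available here since $U$ is pseudoconvex), together with interpolation between $H^1$ and $H^{-1}$. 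A secondary subtlety is verifying that $\chi u$ genuinely satisfies the $\dbar^*$-boundary condition on $b\D$: this is fine because $\mathrm{supp}(\chi u)\subset B(p,r)$ meets $b\D$ only along $bU\cap B(p,r)$, where the condition for $u$ on $bU$ transfers verbatim, and the cutoff does not create a boundary contribution since $\chi$ is smooth up to the boundary. Once these two points are handled, the argument is routine.
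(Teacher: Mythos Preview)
Your overall strategy---cut off near $p$, extend by zero to $\D$, and invoke the compactness estimate on $\D$ with $\mathcal{H}^{\D}_{(0,1)}=\{0\}$ (since $n\ge 3$)---matches the paper's. There are, however, two genuine gaps in the execution.

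First, the a priori bound $\|u\|_1^2\lesssim \|\dbar u\|^2+\|\dbar^* u\|^2+\|u\|^2$ you invoke for the absorption is \emph{false} on a general bounded pseudoconvex domain: the basic estimate gives only $\|u\|^2\lesssim\|\dbar u\|^2+\|\dbar^* u\|^2$ with no gain of derivatives, and any gain up to the boundary (even of order $1/2$) requires strong pseudoconvexity or finite type, whereas here $bU\cap b\D$ may well be Levi-flat. So the Ehrling--interpolation step as written does not go through. The paper absorbs the zeroth-order cutoff errors differently: the term $\|(\nabla\chi_\varepsilon)u\|_\D^2\le C_\varepsilon\|u\|_U^2$ is controlled by the $L^2$ basic estimate on $U$ and then multiplied by an \emph{independent} small parameter $\varepsilon'$ coming from the compactness estimate on $\D$; choosing $\varepsilon'$ small relative to $C_\varepsilon$ suffices. (Incidentally, for the forms you actually consider---those supported in $B(p,r')$ where $\chi\equiv 1$---there is no cutoff error at all, so your absorption discussion is moot there; the real absorption issue arises precisely when one drops that support assumption.)

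Second, your reduction to forms ``supported near $p$'' is not free. The boundary $bU$ has a second piece, $\D\cap bB(p,r)$, which is not interior but genuine (strictly pseudoconvex) boundary, and the compactness estimate there must be established, not assumed. The paper does this explicitly: it introduces the weight $\lambda_\varepsilon(z)=\varepsilon^{-1}(\|z-p\|^2-r^2)$, whose complex Hessian is $\ge \varepsilon^{-1}$ near $bB(p,r)$, and applies the weighted Morrey--Kohn--H\"ormander identity (via the exhaustion procedure of \cite{Straube97}, since $U$ is only Lipschitz) to obtain $\|\phi_\varepsilon u\|_U^2\lesssim\varepsilon(\|\dbar(\phi_\varepsilon u)\|_U^2+\|\dbar^*(\phi_\varepsilon u)\|_U^2)$ for a cutoff $\phi_\varepsilon$ supported near the sphere. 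Combining this with the extended-to-$\D$ estimate for $(1-\phi_\varepsilon)u$ yields the compactness estimate for \emph{all} $u\in\mathrm{Dom}(\dbar)\cap\mathrm{Dom}(\dbar^*)$ on $U$, without appealing to an unproved locality principle.
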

\begin{proof}
 We note that since $n\geq 3$ the space $\mathcal{H}^{\D}_{(0,1)}$ is trivial
and the proof is essentially contained in \cite[Proposition 4.4]{StraubeBook}
once we know that $\mathcal{H}^{\D}_{(0,1)}$ is trivial. However, we will give
the proof here for the convenience of the reader. 

Lemma \ref{LemCompEstimate} implies that
compactness of $N^{\D}$ is equivalent to the following estimate:
for all $\varepsilon>0$ there exists $C_{\varepsilon}>0$ such that 
\[ \|u\|^2_{\D}\leq \varepsilon (\|\dbar u\|^2_{\D}+\|\dbar^*
u\|^2_{\D})+C_{\varepsilon}\|u\|^2_{-1,\D}\ \text{ for }\ u\in Dom(\dbar)\cap
Dom(\dbar^*).\]
Let
$\displaystyle \lambda_{\varepsilon}(z)=\frac{\|z-p\|^2-r^2}{
\varepsilon }.$ One can check  that
$-2r+\varepsilon \leq\lambda_{\varepsilon}(z)\leq 2r+\varepsilon$ and 
\[\sum_{j,k=1}^{n}\frac{\partial^{2}\lambda_{\varepsilon}(z)}{\partial
z_{j}\partial\overline{z}_{k}}\xi_{j}\overline{\xi}_{k}\geq\frac{1}{
\varepsilon}
\|\xi\|^{2}\]
for $z\in V_{\varepsilon}=\lbrace z\in\C^{n}\ \vert\ 
\mbox{dist}(z,bB(p,r))<\varepsilon\rbrace$ for $\xi\in \C^{n}$. 
Now we choose $\phi_{\varepsilon}$ as a smooth cut-off function
$(0\leq\phi_{\varepsilon}\leq 1)$, $\phi_{\varepsilon}\equiv 1$ near $bB(p,r)$
and supported in $V_{\varepsilon}$. The triangle inequality implies that 
\begin{align}\label{ineq2aa}
\norm{u}_{U}^{2}\leq
2\norm{\phi_{\varepsilon}u}_{U}^{2}+2\norm{(1-\phi_{\varepsilon})u}_{U}^
{2} \text{ for } u\in L^2_{(0,1)}(U).
\end{align}
Let $u\in
\mbox{Dom}(\overline{\partial})\cap\mbox{Dom}(\overline{\partial}^{*})\subset 
L^2_{(0,1)}(U)$ then 
$(1-\phi_{\varepsilon})u\in
\mbox{Dom}(\overline{\partial})\cap\mbox{Dom}(\overline{\partial}^{*})\subset
L_{(0,1)}^{2}(\Omega).$ Since the domain $U$ is not $C^2$-smooth a
direct application of  Morrey-Kohn-H\"{o}rmander formula is not possible.
However, one can use the Morrey-Kohn-H\"{o}rmander formula (with
weight $\lambda_{\varepsilon}+\psi$) with  the exhaustion procedure developed in
\cite{Straube97} (see also \cite[Corollary 2.15]{StraubeBook}) together with
the  fact that $\phi_{\varepsilon}u$ belongs to the domain of
$\dbar^*$ on $U$ to show that 
\begin{align}\label{ineq2d}
\norm{\phi_{\varepsilon}u}_{U}^{2}\lesssim \varepsilon(
\norm{\overline{\partial}\left(\phi_{\varepsilon}u\right)}_{U}^{2}+\norm{
\overline{\partial}^*\left(\phi_{\varepsilon}u\right)}_{U}^{2}).
\end{align}
In the inequality above we used generalized constants. That is, $A\lesssim
B$ denotes that $A\leq cB$ where $c>0$  is independent of quantities of
interest. Thus, from \eqref{ineq2aa} and \eqref{ineq2d} we get
\begin{align*}
\norm{u}_{U}^{2}&\lesssim
\varepsilon\left(\norm{\overline{\partial}\left(\phi_{\varepsilon}u\right)}_{
U}^{2}+\norm{\overline{\partial}^*\left(\phi_{\varepsilon}u\right)}_
{U}^{2}\right)+\norm{(1-\phi_{\varepsilon})u}_{U}^{2}\\
 &\lesssim
\varepsilon\left(\norm{\overline{\partial}u}_{U}^{2}+\norm{\overline{
\partial}^*u}_{U}^{2}+\norm{(\nabla\phi_{\varepsilon})u}_{U}^{2
}\right)+\norm{(1-\phi_{\varepsilon})u}_{U}^{2}.
\end{align*}
$(1-\phi_{\varepsilon})u$ and $\nabla\phi_{\varepsilon}u$ can be viewed as forms
on $\Omega$ in
$\mbox{Dom}(\overline{\partial})\cap\mbox{Dom}(\overline{\partial}^*)$. Let
us choose $\chi_{\varepsilon}\in C_{0}^{\infty}(B(p,r))$ such that
$\chi_{\varepsilon}\equiv 1$ on the union of the support of 
$\nabla\phi_{\varepsilon}$ and support of $1-\phi_{\varepsilon}.$ Then
\begin{align}\label{eq1222}
\norm{u}_{U}^{2}\lesssim 
\varepsilon\left(\norm{\overline{\partial}u}_{U}^{2}+\norm{\overline{
\partial}^*u}_{U}^{2}\right)+C_{\phi_{\varepsilon}}\norm{\chi_{
\varepsilon}u}_{U}^{2}
\end{align}
for $ u\in\mbox{Dom}(\overline{\partial})\cap\mbox{Dom} 
(\overline{\partial}^*)\subset L^2_{(0,1)}(U).$ 
Now, we will try to estimate the last term in \eqref{eq1222}.
We note that $\chi_{\varepsilon}
u\in\mbox{Dom}(\overline{\partial})\cap\mbox{Dom}(\overline{\partial}^*)\subset
L_{(0,1)}^{2}(\Omega)$ for any
$u\in\mbox{Dom}(\overline{\partial})\cap\mbox{Dom}(\overline{\partial}^*)\subset
L_{(0,1)}^{2}(U).$ Compactness of $N^{\D}$ on
$L_{(0,1)}^{2}(\D)$ implies that  for all $\varepsilon '>0$ there
exists $C_{\varepsilon '}>0$ such that 
\begin{align}\label{eq133}
\nonumber \norm{\chi_{\varepsilon}u}_{\Omega}^{2}&\leq  \varepsilon '\left(
\norm{\overline{\partial}(\chi_{\varepsilon}u)}_{\Omega}^{2}+\norm{
\overline{\partial}^*(\chi_{\varepsilon}u)}_{\Omega}^{2}\right)+C_{
\varepsilon '}\norm{\chi_{\varepsilon}u}_{-1,\Omega}^{2}\\
 &\leq  \varepsilon '(
\norm{\chi_{\varepsilon}\overline{\partial}u}_{\Omega}^{2} +
\norm{\chi_{\varepsilon}\overline{\partial}^*u}_{\Omega}^{2}+\norm{\nabla\chi_{
\varepsilon
}u}_{\Omega}^{2})+ C_{\varepsilon '}\norm{u}_{-1,U}^{2}
\end{align} 

Now, let us estimate $\norm{\nabla\chi_{\varepsilon}u}_{\Omega}^{2}$ in
(\ref{eq133})
\begin{align*}
\norm{\nabla\chi_{\varepsilon}u}_{\Omega}^{2} \lesssim
\norm{\nabla\chi_{\varepsilon}u}_{U}^{2} 
\nonumber &\lesssim C_{\varepsilon}\norm{u}_{U}^{2} \lesssim
C_{\varepsilon}\left(\norm{\overline{\partial}u}_{U}^{2}+\norm{\overline{
\partial}^*u}_{U}^{2}\right).
\end{align*}
In the last step we used the basic estimate on $U$, $\norm{u}_{U}^{2}\leq
C(\norm{\overline{\partial}u}_{ U}^{2}+ \norm{\overline{\partial}^*u}_{U}).$

Thus, combining \eqref{eq133} and the discussion after it with \eqref{eq1222}
we get  a compactness estimate on $U$. That is, for all $\varepsilon>0$ there
exists $ C_{\varepsilon}>0$ such that 
\begin{align}
\norm{u}_{U}^{2} \lesssim \varepsilon\left(
\norm{\overline{\partial}u}_{U}^{2}+\norm{\overline{\partial}^*u}_{
U}^{2}\right)+C_{\varepsilon}\norm{u}_{-1,U}^{2}
\end{align}
for all $u\in
\mbox{Dom}(\overline{\partial})\cap\mbox{Dom}(\overline{\partial}^*)\subset
L_{(0,1)}^{2}(U)$. 
\end{proof}

\begin{remark}\label{InfiniteDimentionalKernelBox}
We chose the domain $\D$ in $\C^n$ for $n\geq 3$ because we do not know
if the localization in the proof of Lemma \ref{LemLocalization} is possible when
$n=2.$ If $\D\subset \C^2$ is an annulus type domain then
$\mathcal{H}^{\D}_{(0,1)}$ is an infinite dimensional space 
\cite[Theorem 3.5]{Shaw10} whereas $\mathcal{H}^{\D}_{(0,1)}$ is trivial when
$\D$ is pseudoconvex. 
\end{remark}

In the following Lemma $R_{V}$ denotes the restriction operator onto $V.$

\begin{lemma} \label{LemHankelCompact}
Let $\D=\D_1\setminus \Dc_2$ where  $\D_1$ and $\D_2$ are two smooth bounded
pseudoconvex domains in $\C^n, n\geq 3,$ such that $\Dc_2\subset \D_1$ and  
$\phi\in C^1(\Dc_1).$ Then $H^{\D_1}_{\phi}$ is compact on  $A^2(\D_1)$ if and
only if  $H^{\D}_{R_{\D}(\phi)}$ is compact on $A^{2}(\D).$
\end{lemma}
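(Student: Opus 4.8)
The plan is to compare every object attached to $\D$ with the corresponding one attached to $\D_1$ through the restriction operator $R_{\D}$, exploiting that $\D$ and $\D_1$ differ only by the compact set $\Dc_2$, which lies in the interior of $\D_1$.

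First I would record two facts about restriction. Since $n\ge 2$ and $\Dc_2$ is a compact subset of the open set $\D_1$ whose complement $\D=\D_1\setminus\Dc_2$ is connected, the Hartogs extension theorem shows that every $f\in A^2(\D)$ extends to a holomorphic function on $\D_1$; this extension is continuous on the compact set $\Dc_2$, hence square integrable there, so it lies in $A^2(\D_1)$. Thus $R_{\D}$ maps $A^2(\D_1)$ bijectively onto $A^2(\D)$, and being a bounded bijection of Banach spaces it is an isomorphism; I write $E\colon A^2(\D)\to A^2(\D_1)$ for its bounded inverse. The second fact: since $\Dc_2$ lies at positive distance from $b\D_1$, the interior Cauchy estimates for holomorphic functions together with the Arzel\`a--Ascoli theorem show that the restriction operator $R_{\D_2}\colon A^2(\D_1)\to L^2(\D_2)$ is compact.

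Next I would set up the algebraic identity that does the work. Let $T:=P^{\D}\circ R_{\D}-R_{\D}\circ P^{\D_1}$, a bounded operator from $L^2(\D_1)$ into $A^2(\D)$ (observe that $(P^{\D_1}v)|_{\D}$ is holomorphic and square integrable on $\D$). Expanding the definitions of the Hankel operators and using $R_{\D}(\phi g)=(R_{\D}\phi)(R_{\D}g)$, one verifies directly that
\[ R_{\D}\big(H^{\D_1}_{\phi}g\big)=H^{\D}_{R_{\D}\phi}\big(R_{\D}g\big)+T(\phi g)\qquad\text{for all }g\in A^2(\D_1). \]
The crucial point is that $T$ is compact. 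Pairing $Tv$ with $h\in A^2(\D)$ and using $P^{\D}h=h$, the fact that $Eh\in A^2(\D_1)$ restricts to $h$ on $\D$, and $v-P^{\D_1}v\perp A^2(\D_1)$, one computes
\[ \langle Tv,h\rangle_{L^2(\D)}=-\int_{\D_2}\big(v-P^{\D_1}v\big)\,\overline{Eh}, \]
whence $T^{*}=-(I-P^{\D_1})\circ j\circ R_{\D_2}\circ E$, where $j\colon L^2(\D_2)\to L^2(\D_1)$ is extension by zero. The factor $R_{\D_2}\circ E$ is compact by the second fact above and the remaining factors are bounded, so $T^{*}$, and hence $T$, is compact.

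Finally I would assemble the equivalence. Decompose $L^2(\D_1)=L^2(\D)\oplus L^2(\D_2)$ orthogonally; then $H^{\D_1}_{\phi}$ is compact on $A^2(\D_1)$ if and only if both $R_{\D}\circ H^{\D_1}_{\phi}$ and $R_{\D_2}\circ H^{\D_1}_{\phi}$ are compact. But $R_{\D_2}\circ H^{\D_1}_{\phi}$ is compact unconditionally, since $R_{\D_2}(H^{\D_1}_{\phi}g)=(R_{\D_2}\phi)(R_{\D_2}g)-R_{\D_2}(P^{\D_1}(\phi g))$ and each term on the right factors through the compact operator $R_{\D_2}$ of the second fact. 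Hence $H^{\D_1}_{\phi}$ is compact iff $R_{\D}\circ H^{\D_1}_{\phi}$ is; by the displayed identity and compactness of $T$ this holds iff $H^{\D}_{R_{\D}\phi}\circ R_{\D}$ is compact; and since $R_{\D}\colon A^2(\D_1)\to A^2(\D)$ is an isomorphism, that holds iff $H^{\D}_{R_{\D}\phi}$ is compact, which is the claim. The only nonroutine step is the compactness of $T$: the failure of $R_{\D}$ to be unitary is encoded in an integral over $\Dc_2$, and because $\Dc_2$ is interior to $\D_1$ this defect only involves holomorphic functions on a compact set, which is exactly where compactness is generated. (I note that the argument uses only $\phi\in L^{\infty}(\D_1)$; the hypothesis $\phi\in C^1(\Dc_1)$, and with it $\dbar\phi$, is never needed.)
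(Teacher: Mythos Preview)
Your argument is correct and proceeds by a genuinely different, more elementary route than the paper's. The paper treats the two implications asymmetrically: for the forward direction it uses the $\dbar$-interpretation of Hankel operators (writing $H^{\D}_{R_{\D}\phi}f$ as the canonical solution of $\dbar u=f\,\dbar\phi$, which is precisely where the hypothesis $\phi\in C^1(\Dc_1)$ is used) to obtain the formula $(I-P^{\D})R_{\D}H^{\D_1}_{\phi}E=H^{\D}_{R_{\D}\phi}$; for the converse it invokes a localization result from \cite{CuckovicSahutoglu09} to pass from compactness on $\D$ to compactness near points of $b\D_1$ and then back to $\D_1$. You instead isolate the single defect operator $T=P^{\D}R_{\D}-R_{\D}P^{\D_1}$, show via its adjoint that $T$ factors through the compact interior restriction $R_{\D_2}\colon A^2(\D_1)\to L^2(\D_2)$, and then read off both directions simultaneously from the identity $R_{\D}\,H^{\D_1}_{\phi}=H^{\D}_{R_{\D}\phi}\,R_{\D}+T\circ M_{\phi}$ together with the orthogonal splitting $L^2(\D_1)=L^2(\D)\oplus L^2(\D_2)$. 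The payoff of your approach is that it needs only $\phi\in L^{\infty}(\D_1)$, never invokes the $\dbar$-Neumann machinery or an external localization lemma, and in fact goes through already for $n\ge 2$ since only Hartogs extension is used. The paper's approach, on the other hand, keeps the link to canonical solutions visible, which is the viewpoint exploited elsewhere in the article.
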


\begin{proof}
Let us prove the necessity first. By Hartogs extension theorem there
exists a unique bounded  extension operator $E_{\D}^{\D_1}:A^2(\D)\to A^2(\D_1).$ 
One can check that $R_{\D}H^{\D_1}_{\phi} E_{\D}^{\D_1}f$ solves   $\dbar u= f \dbar\phi$
on $\D.$ Furthermore, since $H^{\D}_{R_{\D}(\phi)}f$ is the canonical solution
(the solution with minimal $L^2$ norm) for $\dbar u= f \dbar\phi$ we have 
\[(I-P^{\D})R_{\D}H^{\D_1}_{\phi} E_{\D}^{\D_1} =H^{\D}_{R_{\D}(\phi)}.\]
Therefore, compactness of  $H^{\D_1}_{\phi}$  on $A^{2}(\D_1)$ implies that  
 $H^{\D}_{R_{\D}(\phi)}$ is compact on $A^{2}(\D).$ 

To prove the converse assume that $H^{\D}_{R_{\D}(\phi)}$ is compact on
$A^{2}(\D)$ and $U$ be a neighborhood of $p\in b\D_1$ such that 
$U\cap \D_1=U\cap \D$ is a domain. Then i. in
\cite[Proposition 1]{CuckovicSahutoglu09} implies that 
$H^{\D\cap U}_{R_{\D\cap U}(\phi)}R_{\D\cap U}$ is compact on $A^{2}(\D).$ We
note that even though \cite[Proposition 1]{CuckovicSahutoglu09} is stated for
pseudoconvex domains, i.   is still true for general domains. However, one
can check that $(I-P^{\D_1\cap U})H^{\D\cap U}_{R_{\D\cap U}(\phi)}R_{\D\cap
U}=H^{\D_1\cap U}_{R_{\D_1\cap U}(\phi)}R_{\D_1\cap U}$  on $A^2(\D_1)$
and hence  $H^{\D_1\cap U}_{R_{\D_1\cap U}(\phi)}R_{\D_1\cap U}$ is
compact on $A^{2}(\D_1).$ Now ii. \cite[Proposition 1]{CuckovicSahutoglu09}
implies that $H^{\D_1}_{\phi}$ is compact on $A^{2}(\D_1).$ 
\end{proof}

\begin{remark}\label{PreservingCompactnessOfN} We note that compactness of
$N^{\D_{1}}$ on $A_{(0,1)}^{2}(\Omega_{1})$ is equivalent to compactness of
$N^{\D}$ on $A_{(0,1)}^{2}(\D)$. This can be seen as follows:

Range's formula, $N^{\D_{1}}=\left(\dbar^{*}N_{2}^{\D_{1}}\right)\left(\dbar^{*}
N_{2}^{\D_{1}}\right)^{*}+\left(\dbar^{*}N^{\D_{1}}\right)^{*} 
\left(\dbar^{*}N^{\D_{1}}\right),$ together with the fact that 
$\left(\dbar^{*}N_{2}^{\D_{1}}\right)^{*}u=0$ for $u\in A_{(0,1)}^{2}(\D_{1})$
imply that $N^{\D_{1}}u=\left(\dbar^{*}N^{\D_{1}}\right)^{*}\left(\dbar^{*} 
N^{\D_{1}}\right)u $ for $u\in A_{(0,1)}^{2}(\D_{1}).$ Hence, 
$N^{\D_1}|_{A_{(0,1)}^{2}(\D_1)}$ is  compact if and  only if 
$\dbar^{*}N^{\D_1}|_{A_{(0,1)}^{2}(\D_1)}$ is compact. (Here $f|_X$ denotes the
restriction of the operator $f$ onto the space $X$). Similarly, one can show
that  $N^{\D}|_{A_{(0,1)}^{2}(\Omega)}$ is compact if and only if 
$\dbar^{*}N^{\D}|_{A_{(0,1)}^{2}(\Omega)}$ is compact. On the other
hand, Lemma \ref{LemHankelCompact} implies that compactness of
$\dbar^{*}N^{\D_{1}}|_{A_{(0,1)}^{2}(\Omega_{1})}$ is equivalent to  
compactness of $\dbar^{*}N^{\D}|_{A_{(0,1)}^{2}(\D)}.$
\end{remark}

We will need the following theorem of Catlin.  For a proof we refer the reader
to the proof of Proposition 9 in \cite{FuStraube01} 
(see also \cite{SahutogluStraube06}). We note that even though Catlin's Theorem
in \cite{FuStraube01} is stated in  $\C^2,$ the same proof works for the
following version in $\C^n.$

\begin{theorem*}[Catlin] Let $\D$ be a bounded pseudoconvex domain in 
$\C^n, n\geq 2,$ with  Lipschitz boundary. Assume that  $b\D$ contains
an $(n-1)$-dimensional complex manifold. Then  $N^{\D}$ is not compact
on $L^2_{(0,1)}(\D).$  
\end{theorem*}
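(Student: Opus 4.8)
The plan is to prove the contrapositive by refuting the compactness estimate. Exactly as in Lemma \ref{LemCompEstimate} (here $\D$ is pseudoconvex, so $\mathcal{H}^{\D}_{(0,1)}=\{0\}$ and the harmonic projection drops out), compactness of $N^{\D}$ is equivalent to: for every $\varepsilon>0$ there is $C_{\varepsilon}>0$ with
\[ \norm{u}^2\leq \varepsilon\left(\norm{\dbar u}^2+\norm{\dbar^* u}^2\right)+C_{\varepsilon}\norm{u}_{-1}^2 \]
for all $u\in Dom(\dbar)\cap Dom(\dbar^*)\subset L^2_{(0,1)}(\D)$. To show $N^{\D}$ is \emph{not} compact I would construct a sequence $\{u_j\}$ in $Dom(\dbar)\cap Dom(\dbar^*)$ with $\norm{u_j}=1$, with $\norm{\dbar u_j}^2+\norm{\dbar^* u_j}^2$ bounded, and with $u_j\rightharpoonup 0$ weakly in $L^2_{(0,1)}(\D)$. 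Since the embedding $L^2(\D)\hookrightarrow W^{-1}(\D)$ is compact on a bounded domain, weak nullity forces $\norm{u_j}_{-1}\to 0$; fixing a small $\varepsilon$ and letting $j\to\infty$ then contradicts the estimate. (Equivalently, such a $\{u_j\}$ is bounded in the graph norm, weakly null, but not norm null, so the embedding $j$ with $N=j\circ j^*$ cannot be compact.)

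All the work is local near the manifold $M\subset b\D$. I would fix a point $p$ in the relative interior of $M$; since $M$ is an $(n-1)$-dimensional complex submanifold lying in the smooth part of $b\D$, the merely Lipschitz character of $b\D$ elsewhere is irrelevant. After a local biholomorphic change of coordinates centered at $p$ I may assume that in a neighborhood $U$ of $p$ one has $M=\{z_n=0\}$ and, to first order, $\D\cap U\subset\{\operatorname{Im} z_n<0\}$, with smooth defining function $\rho$. The decisive structural fact is that, $M$ being a complex hypersurface in $b\D$, the complex tangent space of $b\D$ along $M$ is all of $T^{1,0}M$, so the Levi form vanishes on $M$ and $\partial\rho/\partial z_k=0$ on $M$ for $k<n$ while $\partial\rho/\partial z_n\neq 0$ there. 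I would then pick a tangential $(0,1)$-frame element $\bar\omega_1=d\bar z_1+a_n\, d\bar z_n$, where $a_n=-(\partial\rho/\partial z_1)/(\partial\rho/\partial z_n)$ is chosen so that $\bar\omega_1$ satisfies the $\dbar^*$ boundary condition; by complex tangency its normal component obeys $a_n=O(\operatorname{dist}(\cdot,M))$.

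The natural building block is
\[ u_j=c_j\,\theta\, e^{-ijz_n}\,\bar\omega_1, \]
with $\theta\in C_0^\infty(U)$ a cut-off, $\theta\equiv 1$ near $p$, and $c_j>0$ chosen so that $\norm{u_j}=1$. The factor $e^{-ijz_n}$ is holomorphic, and on $\D$ one has $|e^{-ijz_n}|=e^{j\operatorname{Im} z_n}$, which concentrates in an $O(1/j)$-layer against $b\D$; this forces $c_j\sim\sqrt{j}$, and concentration to a measure-zero set together with the oscillation $e^{-ij\operatorname{Re} z_n}$ gives $u_j\rightharpoonup 0$. Tangentiality of $\bar\omega_1$ places $u_j\in Dom(\dbar^*)$, and since $e^{-ijz_n}$ is holomorphic, $\dbar u_j$ lands only on $\theta$ and on the smooth frame coefficients, so $\norm{\dbar u_j}$ stays bounded.

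The main obstacle, and the only place the geometry is used decisively, is the bound on $\norm{\dbar^* u_j}$. Writing $u_j=\sum_k f_k\, d\bar z_k$ one has $\dbar^* u_j=-\sum_k \partial f_k/\partial z_k$ modulo bounded frame terms; the derivatives $\partial/\partial z_k$ with $k<n$ miss the concentrating exponential, but $-\partial f_n/\partial z_n$ with $f_n=c_j\theta e^{-ijz_n}a_n$ produces the dangerous factor $-ij\,c_j\theta a_n e^{-ijz_n}$. Its size $j$ must be absorbed by the smallness $a_n=O(\operatorname{dist}(\cdot,M))$; but the single-scale form above concentrates only against $b\D$, where $\operatorname{dist}(\cdot,M)$ need not be small, so the crude construction fails and one is forced to concentrate $u_j$ toward $M$ in \emph{all} directions, tuning an anisotropic tangential scale against the normal scale $1/j$ and absorbing the resulting cut-off costs. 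Making this multi-scale balance work — so that the factor $j$ is exactly compensated by the vanishing of the Levi form along $M$ — is the technical heart of the theorem (hence its attribution to Catlin). I note that in the special case relevant to this paper, where $b\D$ is Levi-flat on an open piece, the foliation can be straightened to $\{\operatorname{Im} z_n=0\}$, giving $a_n\equiv 0$ and a clean single-scale argument.
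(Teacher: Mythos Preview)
The paper does not prove this theorem: it is quoted as a known result of Catlin, and for a proof the reader is referred to Proposition~9 of \cite{FuStraube01} (see also \cite{SahutogluStraube06}). There is thus no in-paper argument to compare your attempt against.

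On its own merits, your outline is sound and honest. You correctly reduce to refuting the compactness estimate via a bounded, weakly null graph-norm sequence; you set up the local frame correctly; and you isolate the single genuine obstruction, the $\dbar^*$ term $-ij\,c_j\theta\, a_n\, e^{-ijz_n}$, which must be absorbed by $a_n=O(|z_n|)$. Your diagnosis that the one-scale plane wave fails --- because $|z_n|$ is not small in the $\operatorname{Re} z_n$ direction on a fixed cutoff, so that the dangerous term has $L^2$-norm of order $j$ --- is correct, and the indicated cure (shrink the cutoff in $\operatorname{Re} z_n$ at rate comparable to $1/j$, or replace the plane wave by a holomorphic Gaussian in $z_n$) does work after the bookkeeping. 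Your remark that in the Levi-flat case actually used in this paper one may straighten so that $a_n\equiv 0$, making the single-scale argument already succeed, is also correct and is really all the paper needs. What you have written is therefore a faithful sketch with the hard step clearly flagged but not carried out; the referenced proofs supply exactly that step.
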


\begin{proof}[Proof of Theorem \ref{ThmMain}] 
The assumption that $N^{\D_1}$ is compact implies that $H^{\D_1}_{\phi}$ is
compact for all $\phi\in C^1(\Dc_1)$ (see \cite[Proposition 4]{FuStraube01},
\cite[Proposition 4.1]{StraubeBook}, and \cite[Theorem 3]{Haslinger08}). Since
any $\phi\in C^1(\Dc)$ can be extended as $C^1$ function on $\C^n$  
Lemma \ref{LemHankelCompact} implies that $H^{\D}_{\phi}$ is compact for all
$\phi\in C^1(\Dc).$ However, one can approximate any $\phi\in C(\Dc)$ uniformly
on $\Dc$ by $C^1$ functions. Therefore,  we conclude that $H^{\D}_{\phi}$ is
compact on $A^{2}(\D)$ for all $\phi\in C(\Dc).$ 
 
Now we will show that $N^{\D}$ is not compact. Shaw's Theorem, stated
in the introduction, implies that $N^{\D}$ is a bounded operator  on
$L_{(0,1)}^{2}(\D)$. Assume  that $N^{\D}$ is compact on
$L^2_{(0,1)}(\D).$ Let us choose $p\in b\D_2$ and $r>0$ so that  $U=\D\cap
B(p,r)$ is a domain that does not intersect $b\D_1$ and  the (inner) boundary of
$\D$ in $B(p,r)$ is Levi-flat. Hence $U$ is a non-smooth bounded 
pseudoconvex domain. Lemma \ref{LemLocalization} implies that if
$N^{\D}$ is compact on $L_{(0,1)}^{2}(\D)$ then $N^{U}$  is
compact on $L_{(0,1)}^{2}(U).$ Compactness of $N^U$ implies that $\dbar$ has a
compact solution operator on $L^2_{(0,1)}(U).$ However, this contradicts 
Catlin's Theorem stated above.  Hence, $N^{\D}$ is not compact on
$L_{(0,1)}^{2}(\D).$ This contradiction  with the assumption that $N^{\D}$ is
compact completes the proof.  
\end{proof}

\section*{acknowledgement}
We would like to thank our advisor Emil Straube for valuable comments on a
preliminary version of this manuscript.


\end{document}